\title{\LARGE \bf A Computationally Efficient Implementation of Fictitious Play for Large-Scale Games}
\author{BRIAN SWENSON$^{\dagger\star}$, SOUMMYA KAR$^\dagger$ AND JO\~{A}O XAVIER$^\star$\thanks{The work was partially supported by the FCT project FCT [UID/EEA/50009/2013] through the Carnegie-Mellon/Portugal Program managed by ICTI from FCT and by FCT Grant CMU-PT/SIA/0026/2009, and was partially supported by NSF grant ECCS-1306128. \newline$^\dagger$Department of Electrical and Computer Engineering, Carnegie Mellon University, Pittsburgh, PA 15213, USA (brianswe@andrew.cmu.edu and soummyak@andrew.cmu.edu).\newline $^\star$Institute for Systems and Robotics (ISR/IST), LARSyS, Instituto Superior T\'{e}cnico, Portugal (jxavier@isr.ist.utl.pt).}}
\newtheorem{theorem}{Theorem}
\newtheorem{lemma}{Lemma}
\newtheorem{assumption}{A.}
\begin{document}

\maketitle
\thispagestyle{empty}
\begin{abstract}
The paper is concerned with distributed learning and optimization in large-scale settings. The well-known Fictitious Play (FP) algorithm has been shown to achieve Nash equilibrium learning in certain classes of multi-agent games. However, FP can be computationally difficult to implement when the number of players is large. Sampled FP is a variant of FP that mitigates the computational difficulties arising in FP by using a Monte-Carlo (i.e., sampling-based) approach.
The Sampled FP algorithm has been studied both as a tool for distributed learning and as an optimization heuristic for large-scale problems.
Despite its computational advantages, a shortcoming of Sampled FP is that the number of samples that must be drawn in each round of the algorithm grows without bound (on the order of $\sqrt{t}$, where $t$ is the round of the repeated play).
In this paper we propose Computationally Efficient Sampled FP (CESFP)---a variant of Sampled FP in which only one sample need be drawn each round of the algorithm (a substantial reduction from $O(\sqrt{t})$ samples per round, as required in Sampled FP). CESFP operates using a stochastic-approximation type rule to estimate the expected utility from round to round. It is proven that the CESFP algorithm achieves Nash equilibrium learning in the same sense as classical FP and Sampled FP. Simulation results suggest that the convergence rate of CESFP (in terms of repeated-play iterations) is similar to that of Sampled FP.
\end{abstract}
\section{Introduction}

A game-theoretic learning algorithm is an adaptive multi-agent procedure which can enable a system of interacting agents to achieve desirable global behavior using local (agent-based) control laws. Such algorithms have a wide range of applications in distributed control \cite{marden2012game,saad2012game,mrp-2011-journal,wang2010game,arslan2007autonomous} and large-scale optimization \cite{yang2010distributed,na-marden-2011b,marden-young-2011,Lambert01,garcia2000fictitious}.

Fictitious Play (FP) \cite{Brown51} is an archetypal game-theoretic learning algorithm that has received much attention over the years due to its intuitively simple nature and proven convergence results for certain important classes of games (see, for example, \cite{fudenberg1998theory,young2004strategic} and references therein).
Of particular interest are results demonstrating that FP leads players\footnote{We use the terms agent and player interchangeably throughout the paper.} to learn equilibrium strategies in potential games---a class of multi-agent games in which there may be an arbitrarily large number of players \cite{Mond01,Mond96}.

However, such convergence results tend to be of limited practical value due to computational difficulties that may arise when implementing FP in large games. In particular, in each stage of the FP algorithm, each player $i$ must compute the expected (mixed) utility for each of her actions given her current beliefs regarding opponents' strategies. Evaluating this expected utility---the domain of which is an $(n-1)$-dimensional probability simplex---is a problem whose complexity in general scales exponentially in terms of the number of players, $n$.

The main focus of the present paper is the presentation of a variant of FP that might be more practical to implement in certain large-scale settings. In particular, we consider a practical method for mitigating computational complexity using a Monte-Carlo type approach.

Sampled FP \cite{Lambert01,epelman2011sampled,ghate2014sampled,lambert2003fictitious,garcia2000fictitious,sisikoglu2011sampled} introduced the idea of mitigating complexity in FP using a Monte-Carlo (i.e., sampling-based) approach. At each iteration of the Sampled FP algorithm, players approximate the expected utility by drawing several samples from an underlying probability distribution. Players then myopically choose an ``optimal'' next-stage action using the approximated utility as a surrogate for the true expected utility. The work \cite{Lambert01} showed that, as long as the number of samples drawn each round grows sufficiently quickly, players learn an equilibrium in the same sense as FP, almost surely (a.s.).

In essence, Sampled FP achieves a mitigation in complexity by avoiding any direct evaluation of the expected utility. However, Sampled FP has a notable shortcoming: In order to guarantee learning is achieved, the number of samples that must be drawn in each iteration (i.e., round) of the algorithm grows without bound (on the order of $\sqrt{t}$ samples per round, where $t$ is the current round of the repeated play algorithm).

In this paper, we propose a variant of Sampled FP---which we call Computationally Efficient Sampled FP (CESFP)---in which only one sample need be drawn each round of the repeated play process. CESFP achieves the same fundamental computational advantage as Sampled FP (i.e., direct evaluation of the expected utility is avoided) but does so by drawing only one sample per round (rather than the $O(\sqrt t)$ samples per round, required in Sampled FP).

Intuitively, the reduction in the required number of per-round samples is accomplished by treating the expected utility process as \emph{quasi static}. Such treatment is possible due to the diminishing incremental step size in the the expected utility process.
In CESFP, the sample data gathered in the current round of repeated play is recursively combined with sample data from previous rounds using a stochastic-approximation-type estimation rule. This may be contrasted with Sampled FP where, in each round of the repeated play, data gathered from sampling in the previous round is wholly discarded and a fresh set of samples is gathered to  approximate the expected utility for the upcoming round. (See Section \ref{sec_CESFP_discussion} for more details.)

Due to the improved efficiency in information handling, CESFP is able to achieve convergence at a rate similar to that of sampled FP (in terms of repeated-play iterations) despite drawing far fewer samples per-iteration. (See Section \ref{sec_simulations} for more details.)

The main contribution of the paper is the presentation of the CESFP algorithm and proof of convergence of the algorithm in terms of empirical frequency to the set of Nash equilibria (a.s.). The proof relies on showing that the CESFP process may be seen as a Generalized Weakened FP process as studied in \cite{leslie2006generalised}.

CESFP may be applicable as a computationally efficient variant of FP in a variety of settings including large-scale optimization \cite{Lambert01},\cite{lambert2003fictitious}, dynamic programming \cite{epelman2011sampled,ghate2014sampled}, traffic routing \cite{garcia2000fictitious}, and cognitive radio \cite{dabcevic2014fictitious,dabcevic2014intelligent,wang2010game}, and learning in Markov decision processes \cite{sisikoglu2011sampled}. CESFP may also be used as a general tool for distributed learning \cite{swenson2015D-CESFP} or control \cite{marden2012game}.

Related works have studied approaches for mitigating computational issues arising in large-scale implementations of FP. Joint Strategy FP (JSFP) \cite{marden06} studies a variant of FP in which players update a utility estimate using a computationally-simple recursive procedure and choose next-stage actions using a best-response rule combined with an inertial term. JSFP is shown to converge to pure strategy Nash equilibria (NE) in ordinal potential games but is fundamentally different from CESFP (and FP) in that the tracked utility corresponds to an empirical distribution taken over joint actions,\footnote{In FP, Sampled FP, and CESFP, players best respond to the product of marginal empirical distributions (or an estimate thereof) which implicitly presumes a form of independence among opponents' strategies. Tracking and responding to the empirical distribution of joint actions, as in JSFP, fundamentally alters the dynamics of classical FP. CESFP achieves computationally efficiency while preserving the basic dynamical structure of FP.} and convergence may only occur at pure NE.

Payoff-based learning algorithms---including those based on FP \cite{chapman2013convergent,leslie2006generalised,leslie2003convergent,Arslan04} and otherwise \cite{marden-payoff,germano2007global,pradelski2012learning}---tend to be computationally simple in large games and have the further advantage that they do not require players to have any knowledge of the game's utility structure.
However, such algorithms implicitly assume players have access to instantaneous payoff information and may not be applicable in settings where this information is costly to obtain, delayed, or otherwise unavailable.

For example, in a follow-up paper \cite{swenson2015D-CESFP} we consider an application in which CESFP is implemented in a network-based setting in which all inter-agent communication is restricted to a preassigned (possibly sparse) communication graph~\cite{swenson2012ECFP}. Instantaneous payoff information can be difficult to obtain in such a setting, particularly in the case that the utility corresponds to a non-local welfare-type utility function, which may not be physically measurable at any single agent. Furthermore, there are circumstances in which each round of physical game play can incur an exogenous cost. In such cases it may be preferable to supplement payoff-based learning---which depends on interaction in the physical environment---with forms of model-based learning.

The remainder of the paper is organized as follows. Section \ref{sec_prelims} sets up the notation to be used in the subsequent development. Section \ref{sec_FP} reviews classical FP. Section \ref{sec_sampled_FP} reviews Sampled FP. Section \ref{sec_CESFP} presents the CESFP algorithm, states the main convergence result for CESFP, and proves the result. Section \ref{sec_simulations} presents a simulation example comparing Sampled FP and CESFP. Section \ref{sec_conclusions} provides concluding remarks.

\section{Preliminaries}
\label{sec_prelims}
A game in normal form is represented by the tuple $\Gamma := (N,(Y_i,u_i)_{i\in N})$, where $N = \{1,\ldots,n\}$ denotes the set of players, $Y_i$ denotes the finite set of actions available to player $i$, and $u_i:\prod_{i\in N}Y_i \rightarrow \mathbb{R}$ denotes the utility function of player $i$. Denote by $Y:= \prod_{i\in N} Y_i$ the joint action space.

In order to guarantee the existence of Nash equilibria it is necessary to consider the mixed-extension of $\Gamma$ in which players are permitted to play probabilistic strategies. Let $m_i := |Y_i|$ be the cardinality of the action space of player $i$, and let $\Delta_i := \{p\in \mathbb{R}^{m_i}:\sum_{k=1}^{m_i}p(k) = 1,~p(k)\geq 0, ~\forall k\}$ denote the set of mixed strategies available to player $i$---note that a mixed strategy is a probability distribution over the action space of player $i$. Denote by $\Delta^n := \prod_{i\in N} \Delta_i$ the set of joint mixed strategies. When convenient, we represent a mixed strategy $p\in \Delta^n$ by $p=(p_i,p_{-i})$, where $p_i$ denotes the marginal strategy of player $i$ and $p_{-i}$ is a $(n-1)$-tuple containing the marginal strategies of the other players.

In the context of mixed strategies, we often wish to retain the notion of playing a single deterministic action. For this purpose, let $A_i := \{e_1,\ldots,e_{m_i}\}$ denote the set of ``pure strategies'' of player $i$, where $e_j$ is the $j$-th canonical vector containing a $1$ at position $j$ and zeros otherwise. Note that there is a one-to-one correspondence between a player's action set $Y_i$ and the player's set of pure strategies $A_i\subset \Delta_i$.

%
%

The mixed utility function of player $i$ is given by
\begin{equation}
\label{def_mixed_U}
U_i(p) := \sum_{y \in Y} u_i(y) p_1(y)\ldots p_n(y)
\end{equation}
\noindent where $U_i:\Delta^n \rightarrow \mathbb{R}$. Note that the mixed utility $U_i(p)$ may be interpreted as the expected utility of $u_i(y)$ given that players' (marginal) mixed strategies $p_i$ are independent.

The set of Nash equilibria is given by $NE := \{p\in \Delta^n: U_i(p_i,p_{-i}) \geq U_i( p_i',p_{-i}), ~\forall p_i' \in \Delta_i,~\forall i\in N\}$.
The distance of a distribution $p \in \Delta^n$ from a set $S \subset \Delta^n$ is given by $d(p,S) = \inf \{ \| p - p' \| : p'\in S\}$. Throughout the paper $\|\cdot\|$ denotes the standard $\mathcal{L}_{2}$ Euclidean norm unless otherwise specified.

Throughout, we assume there exists a probability space $(\Omega,\mathcal{F},\mathbb{P})$ rich enough to carry out the construction of the various random variables required in the paper. For a random object $X$ defined on a measurable space $(\Omega,\mathcal{F})$, let $\sigma(X)$ denote the $\sigma$-algebra generated by $X$ \cite{williams_book}. As a matter of convention, all equalities and inequalities involving random objects are to be interpreted almost surely (a.s.) with respect to the underlying probability measure, unless otherwise stated.

\subsection{Repeated Play}
The learning algorithms considered in this paper all assume the following format of repeated play. Let a normal form game $\Gamma$ be fixed. Let players repeatedly face off in the game $\Gamma$, and for $t\in\{1,2,\ldots\}$, let $a_i(t)\in A_i$ denote the action played by player $i$ in round $t$. Let the $n$-tuple $a(t) = (a_1(t),\ldots,a_n(t))$ denote the joint action at time $t$.

Let the empirical history distribution (or empirical distribution) of player $i$ be given by\footnote{Note that each $a_i(t)\in A_i$ is a delta distribution, and thus the empirical distribution $q_i(t)$ is a normalized histogram of the action choices of player $i$.} $q_i(t) := \frac{1}{t}\sum_{s=1}^t a_i(s)$, and let the joint empirical distribution be given by the $n$-tuple $q(t) = (q_1(t),\ldots,q_n(t))$.

\section{Fictitious Play}
\label{sec_FP}
A FP process may be intuitively described as follows: A finite set of $n$ agents engage in repeated play of some fixed normal form game. Each round of the repeated play, each agent $i$ plays an action that is myopically optimal under the (naive) assumption that all opponents are playing according to time-invariant and statistically independent strategies. In particular, under this assumption, each player $i$ believes that the empirical distribution $q_{-i}(t)$ of opponents' play is an accurate representation of opponents' (supposedly time-invariant) strategies and chooses a next-stage action that optimizes their utility given this belief.

A formal description of the FP algorithm is given below.
\subsection{FP algorithm}
\noindent \emph{Initialize} \\
(i) Each player $i$ chooses an arbitrary initial action $a_i(1)\in A_i$. The empirical distribution is initialized as $q_i(1) = a_i(1),~\forall i$. \\
\noindent \emph{Iterate} ($t\geq 1$)\\
(ii) Each player $i$ chooses her next-stage action as a best response to the current empirical distribution of opponents' play:
\begin{equation}
\label{FP_action_eq}
a_i(t+1) \in \arg\max_{\alpha_i \in A_i} U_i(\alpha_i,q_{-i}(t)).
\end{equation}
\noindent (iii) For each player $i$, the empirical distribution is updated to reflect the action just taken, $q_i(t+1) = \frac{1}{t+1}\sum_{s=1}^{t+1} a_i(s),$
or equivalently in recursive form:
$$q_i(t+1) = q_i(t) + \frac{1}{t+1}(a_i(t+1) - q_i(t)).$$
\subsection{Discussion}

Various works have established results characterizing classes of games in which FP does (or does not) lead players to learn NE strategies (see monographs \cite{fudenberg1998theory},\cite{young2004strategic}).
Of particular relevance to the large-scale setting is a class of multi-agent games known as potential games \cite{Mond96}. A game $\Gamma = (N,(Y_i,u_i(\cdot))_{i\in N})$ is said to be a potential game if there exists a potential function $\phi:Y^n\rightarrow \mathbb{R}$ such that for all $i\in N$, and all $y_{-i} \in Y_{-i}$
$$u_i(y_i,y_{-i}) - u_i(x_i,y_{-i}) = \phi(y_i,y_{-i}) - \phi(x_i,y_{-i}), ~\forall y_i,x_i \in Y_i.$$
\noindent Intuitively, the existence of a potential function means that all player's utility functions are aligned in such way that players share a common underlying objective. It has been shown \cite{Mond96,Mond01} that if $\Gamma$ is a potential game, then FP leads players to learn NE strategies in the sense that $\lim_{t\rightarrow\infty} d(q(t),NE) = 0$.

\subsection{Computational Complexity in large-scale FP}
While FP is theoretically proven to achieve NE learning in potential games, it can be computationally difficult to implement when the number of players is large.
In particular, note that in order to choose a next-stage action (see \eqref{FP_action_eq}) player $i$ must compute the mixed utility $U_i(\alpha_i,q_{-i}(t)), ~\forall \alpha_i \in A_i$. Recalling the definition of mixed utility \eqref{def_mixed_U}, this is equivalent to computing an expected value over an $(n-1)$-dimensional probability simplex. In general, the complexity of this computation grows exponentially in terms of the number of players.

\section{Sampled FP}
\label{sec_sampled_FP}
In order to mitigate the problem of computational complexity in FP, \cite{Lambert01} proposed Sampled FP. In Sampled FP, players use a Monte-Carlo approach to avoid direct evaluation of the mixed utility when choosing a next-stage action.

In particular, for each $\alpha_i \in A_i$, let $\widehat U_i(\alpha_i,t)$ denote an estimate that player $i$ forms of the mixed utility $U_i(\alpha_i,q_{-i}(t))$. Each round of play, for each player $i$, several ``test actions'' are drawn as random samples from opponents' joint empirical distribution
$q_{-i}(t)$. For each action $\alpha_i\in A_i$, player $i$ computes the average utility the action $\alpha_i$ would generate given the randomly sampled ``test actions.'' Player $i$ then chooses a next-stage action that is myopically optimal using the estimated utility as a surrogate for the true mixed utility in \eqref{FP_action_eq}. So long as the number of samples increases sufficiently quickly, it can be shown that Sampled FP leads players to learn NE strategies.

Formally, let $k_t$ denote the number of samples drawn in round $t$. The following assumption on $k_t$ is sufficient to ensure learning is achieved.
\begin{assumption}
\label{a_sampled_FP}
The number of samples drawn in round $t$ satisfies $k_t = \lceil Ct^\gamma\rceil$ where $\gamma > 1/2$ and $C>0$.
\end{assumption}
\noindent The Sampled FP algorithm is outlined below.
\subsection{Sampled FP Algorithm}
\noindent \emph{Initialize} \\
(i) Each player $i$ chooses an arbitrary initial action $a_i(1)\in A_i$. The empirical distribution is initialized as $q_i(1) = a_i(1), ~\forall i$.
\noindent\\
\emph{Iterate} ($t\geq 1$)\\
(ii) $k_t$ ``test actions'' are drawn as random samples from the joint empirical distribution $q(t)$; let $\tilde a^s(t)$ denote the $s$-th random sample drawn in round $t$. Player $i$ estimates the utility of each of her actions $\alpha_i \in A_i$ by\footnote{Since $\tilde a_{-i}(t)$ is a pure strategy, the evaluation of the utility is relatively simple. Note also that it is not necessary to draw a separate sequence of ``test actions'' $\{\tilde a^s(t)\}_{s=1}^{k_t}$ for each player $i$, although, doing so does not affect the convergence result.}
\begin{equation}
\label{sampled_FP_utility_estimate}
\widehat U_i(\alpha_i,t) = \frac{1}{k_t}\sum_{s=1}^{k_t} U_i(\alpha_i,\tilde a_{-i}^s(t)).
\end{equation}
\noindent (iii) Each player $i$ chooses a next-stage action that is a best response given her estimate of the mixed utility:
\begin{equation}
\label{sampled_FP_action_eq}
a_i(t+1) \in \arg\max_{\alpha_i \in A_i} \widehat U_i(\alpha_i,t).
\end{equation}
\noindent (iv) The empirical distribution for each player $i$ is updated recursively to account for the action just taken:
$$q_i(t+1) = q(t) + \frac{1}{t+1}\left(a_i(t+1) - q_i(t) \right).$$
\subsection{Convergence Result}
The following result (\cite{Lambert01}, Theorem 5) shows that under \textbf{A.\ref{a_sampled_FP}}, Sampled FP achieves NE learning in the same sense as classical FP for a special class of potential games known as identical interests games \cite{Mond01}, in which all players use an identical utility function; i.e., $u_i(y) = u_j(y), ~\forall y\in Y,~\forall i,j\in N.$
\begin{theorem}[Theorem 5 in \cite{Lambert01}]
Let $\Gamma$ be a finite game in strategic form with identical payoffs. Then, any Sampled FP process with sample sizes satisfying \textbf{A.\ref{a_sampled_FP}} converges in beliefs to equilibrium with probability 1. That is, $d(q(t),NE) \rightarrow 0$ almost surely as $t\rightarrow\infty$.
\end{theorem}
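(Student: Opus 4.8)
The plan is to recognize a Sampled FP process as a \emph{weakened} (approximate) fictitious play process and then invoke the convergence theory for such processes in potential games. Since all players share the common utility $u$, the game $\Gamma$ is a potential game with potential $\phi\equiv u$, so the earlier-cited convergence of exact FP in potential games applies to $\Gamma$; the only discrepancy with Sampled FP is that the best-response step \eqref{sampled_FP_action_eq} uses the Monte-Carlo estimate $\widehat U_i(\alpha_i,t)$ in place of $U_i(\alpha_i,q_{-i}(t))$. Accordingly, define the per-round error $\epsilon_t := \max_{i\in N}\max_{\alpha_i\in A_i}\big|\widehat U_i(\alpha_i,t) - U_i(\alpha_i,q_{-i}(t))\big|$. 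A one-line argument shows that $a_i(t+1)$ from \eqref{sampled_FP_action_eq} is a $2\epsilon_t$-best response to $q_{-i}(t)$: if $\alpha^\ast$ maximizes $U_i(\cdot,q_{-i}(t))$ then $U_i(a_i(t+1),q_{-i}(t)) \ge \widehat U_i(a_i(t+1),t) - \epsilon_t \ge \widehat U_i(\alpha^\ast,t) - \epsilon_t \ge U_i(\alpha^\ast,q_{-i}(t)) - 2\epsilon_t$, and linearity of $U_i$ in its first argument extends the bound to all mixed deviations in $\Delta_i$.

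Next I would show $\epsilon_t \to 0$ a.s. Let $\mathcal F_t$ be the $\sigma$-algebra generated by the play through round $t$. Conditioned on $\mathcal F_t$, the test actions $\tilde a^1(t),\dots,\tilde a^{k_t}(t)$ are i.i.d.\ with law $q(t)$, so the summands $U_i(\alpha_i,\tilde a^s_{-i}(t))$ in \eqref{sampled_FP_utility_estimate} are i.i.d., bounded in modulus by $\bar u := \max_{y}|u_i(y)|$, with conditional mean $U_i(\alpha_i,q_{-i}(t))$. Hoeffding's inequality gives $\mathbb P\big(|\widehat U_i(\alpha_i,t) - U_i(\alpha_i,q_{-i}(t))| > \delta \mid \mathcal F_t\big) \le 2\exp(-k_t\delta^2/(2\bar u^2))$. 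A union bound over the finitely many players and actions together with $k_t \ge Ct^\gamma$ makes these probabilities summable in $t$ for each fixed $\delta>0$; Borel--Cantelli then yields $\limsup_t \epsilon_t \le \delta$ a.s., and letting $\delta\downarrow 0$ along $\delta=1/m$ gives $\epsilon_t\to 0$ a.s. (In fact, taking $\delta=\delta_t\downarrow 0$ polynomially shows $\epsilon_t = O(t^{-\rho})$ a.s.\ for some $\rho>0$; the exponential concentration makes any polynomially growing $k_t$ more than enough here, \textbf{A.\ref{a_sampled_FP}} being comfortably sufficient.)

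Finally I would close the loop. The recursion $q_i(t+1) = q_i(t) + \tfrac{1}{t+1}\big(a_i(t+1) - q_i(t)\big)$ has step sizes $1/(t+1)$ summing to infinity and vanishing, requires no additive-noise term, and — by the previous two steps — drives $q$ by a sequence of $\epsilon_t$-best responses with $\epsilon_t\to 0$ a.s.; this is exactly a Generalized Weakened Fictitious Play process in the sense of \cite{leslie2006generalised}. Randomness of $\epsilon_t$ is dealt with pathwise: on the a.s.\ event $\{\epsilon_s\le\delta\ \forall\, s\ge T\}$ the tail is a genuine $\delta$-weakened FP, so $\limsup_t d(q(t),NE)$ is bounded by a quantity tending to $0$ as $\delta\downarrow 0$. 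For potential games — hence for identical-interests games, where the potential $\phi\equiv u$ is a strict Lyapunov function for the best-response differential inclusion off $NE$ — the convergence result of \cite{leslie2006generalised} gives $d(q(t),NE)\to 0$, completing the proof. (Alternatively one can bypass the differential-inclusion machinery with a direct Monderer--Shapley-style estimate showing $\phi(q(t))$ is monotone up to an $O(\epsilon_t/t)$ per-step error and summing, using $\sum_t\epsilon_t/t<\infty$ from the polynomial rate above.) The main obstacle is this last step: vanishing sampling error by itself does not force convergence, since approximate best-response dynamics can fail to converge in general games, so one must genuinely use the potential structure — either by quoting the GWFP theorem and verifying its hypotheses (step sizes, boundedness, convergence of the best-response dynamics in potential games, and the random-$\epsilon_t$ technicality) or by redoing the Lyapunov argument for approximate FP. Step two, the concentration estimate, is routine once the conditional i.i.d.\ structure of the samples is noticed.
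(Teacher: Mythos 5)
The paper does not actually prove this statement---it is quoted verbatim as Theorem 5 of \cite{Lambert01} and used as background---so there is no in-paper proof to compare against. Your reconstruction is correct and essentially mirrors the strategy this paper uses for its \emph{own} main theorem on CESFP: show that the sampling error $\epsilon_t=\max_{i,\alpha_i}|\widehat U_i(\alpha_i,t)-U_i(\alpha_i,q_{-i}(t))|$ vanishes a.s., conclude that the realized actions are $2\epsilon_t$-best responses, and invoke the Generalized Weakened FP convergence result (Corollary 5 of \cite{leslie2006generalised}) for potential games. The $2\epsilon_t$-best-response step, the conditional Hoeffding bound $2\exp(-k_t\delta^2/(2\bar u^2))$ (whose expectation is a deterministic summable bound, so Borel--Cantelli applies unconditionally), and the pathwise handling of the random $\epsilon_t$ are all sound. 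Two remarks: first, the original argument in \cite{Lambert01} predates the GWFP machinery and instead appeals to Monderer--Shapley's result on fictitious play with vanishing errors in identical-interests games, which is essentially the ``direct Lyapunov'' alternative you sketch at the end; second, your observation that exponential concentration makes \textbf{A.1} far stronger than necessary for this route is accurate---any $k_t$ growing faster than $\log t$ would give summability for each fixed $\delta$---so the threshold $\gamma>1/2$ is an artifact of the weaker (variance-based) estimates used in the original source rather than a sharp requirement. No gaps.
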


\section{Computationally Efficient Sampled FP}
\label{sec_CESFP}
While Sampled FP does obtain computational savings when compared to classical FP, it may be considered unsatisfactory in the sense that it requires players to draw a number of samples each round that grows without bound (see \textbf{A.\ref{a_sampled_FP}}).
In this section, we present an adaptation of Sampled FP in which only one sample need be drawn each round of the repeated play.

\subsection{Algorithm Setup}
In CESFP, players form an estimate of the mixed utility using a recursive stochastic-approximation-type rule. Similar to Sampled FP, let $\widehat U_i(\alpha_i,t)$ be the estimate which player $i$ maintains of the mixed utility $U_i(\alpha_i,q_{-i}(t))$ for each of her actions $\alpha_i \in A_i$. Let $\{\rho(t)\}_{t\geq 1}$ be a deterministic sequence of weights to be used in the stochastic-approximation-type procedure, and assume:
\begin{assumption}
The sequence $\{\rho(t)\}_{t\geq}$ is such that $0< \rho(t)\leq 1$, $\sum_{t\geq 1} (\rho(t))^2 < \infty$, and $\lim_{t\rightarrow\infty} \frac{1}{t\rho(t)} = 0$.
\label{a_rho_1}
\end{assumption}
Note that, by Lemma \ref{lemma_a_rho_implication}, \textbf{A.\ref{a_rho_1}} implies that $\sum_{t\geq 1} \rho(t) = \infty$.
The Computationally Efficient Sampled FP algorithm is outlined below.
\subsection{Computationally Efficient Sampled FP Algorithm}
\noindent \emph{Initialize}\\
(i) For each $i\in N$, let $a_i(1)\in A_i$ be arbitrary. Initialize the empirical distribution as $q_i(1) = a_i(1),~\forall i$, and initialize the utility estimate as $\widehat U(\alpha_i,0) = 0,~\forall \alpha_i \in A_i,~\forall i$. \\
\emph{Iterate} $(t\geq 1)$\\
(ii) A single ``test action'' $a^*(t)$ is drawn as a (statistically independent) random sample from the distribution $q(t)$, and each player $i$ updates the estimate $\widehat U_i(\alpha_i,t)$ for each action $\alpha_i\in A_i$ according to the recursion,\footnote{Since $a_{-i}^*(t)$ is a pure strategy, the evaluation of the utility is relatively simple. Also note that it is not necessary for each player $i$ to draw a separate ``test action'' $a_{-i}^*(t+1)$; although, if desired (for example, in a distributed setting \cite{swenson2015D-CESFP}), doing so does not affect the convergence result.}
\begin{equation}
\label{CESFP_estimate_rule}
\widehat U_i(\alpha_i,t) = (1-\rho(t))\widehat U_i(\alpha_i,t-1) + \rho(t)U_i(\alpha_i,a_{-i}^*(t)).
\end{equation}
\noindent (iii) Each player $i$ chooses a next-stage action using the rule (cf. \eqref{FP_action_eq}, \eqref{sampled_FP_action_eq}):
\begin{equation}
\label{efficient_FP_action_eq}
a_i(t+1) \in \arg\max_{\alpha_i \in A_i} \widehat U_i(\alpha_i,t).
\end{equation}
\noindent (iv) The empirical distribution for each player $i$ is updated to reflect the action just taken:
\begin{equation}
q_i(t+1) = q_i(t) + \frac{1}{t+1}(a_i(t+1) - q_i(t)).
\label{qt_CESFP}
\end{equation}
\subsection{Discussion}
\label{sec_CESFP_discussion}
The main difference between Sampled FP and CESFP is the manner in which players form estimates of the mixed utility sequence $\{U(\alpha_i,q_{-i}(t))\}_{t\geq 1},~\forall \alpha_i \in A_i$. In Sampled FP, players' estimates (see \eqref{sampled_FP_utility_estimate}) ``start afresh'' each round of the repeated play---information gathered from sampling in the previous round is discarded, and players draw roughly $\sqrt t$ new samples in order to form an estimate of the utility for the current round.

This may be considered an inefficient use of information, since the mixed utility only changes slightly from one round to the next. In particular, note that the mixed utility $U_i(\alpha_i,\cdot)$ is Lipschitz continuous with some Lipschitz constant $K$, and the increment of the empirical distribution \eqref{qt_CESFP} is bounded as $\|q(t) - q(t-1)\| \leq \frac{M}{t}$ for some constant $M>0$. Thus, the increment in the mixed utility is bounded as
\begin{equation}
|U_i(\alpha_i, q_{-i}(t)) - U_i(\alpha_i,q_{-i}(t))| \leq KM/t. 
\label{U_step_size}
\end{equation}

Intuitively speaking, this means that if one has an accurate estimate of the mixed utility $U_i(\alpha_i,q_{-i}(t-1))$ in round $(t-1)$, then it is wasteful to wholly discard this information when forming an estimate of $U_i(\alpha_i,q_{-i}(t))$.
The CESFP estimation rule leverages the diminishing increment property \eqref{U_step_size} in order to form an accurate estimate using only one sample per round.

Effectively, the Sampled FP estimation rule treats $\{U_i(\alpha_i,q_{-i}(t))\}_{t\geq 1}$ as if it were arbitrarily generated from one round to the next---drawing a completely new set of $t^\gamma$, $\gamma>1/2$ samples to estimate each $U_i(\alpha_i,q_{-i}(t))$. The CESFP estimation rule, on the other hand, treats $\{U_i(\alpha_i,q_{-i}(t))\}_{t\geq 1}$ as if it were \emph{quasi static}, drawing one sample per round, and taking a type of average over time.\footnote{Additional insight may be gained by considering the dynamical systems approach to stochastic approximations (e.g. \cite{borkar2008stochastic}) which allows one to study the behavior of certain discrete-time processes by analyzing an associated differential equation. In such an analysis, an estimation rule such as \eqref{CESFP_estimate_rule} is often considered as a two time-scale system \cite{borkar1997stochastic,leslie2003convergent}, with the ODE associated with the estimation rule operating at a \emph{faster} rate than ODE associated with the mixed utility process. In the asymptotic analysis of such systems, the slower process may often be treated as effectively static compared to the faster process. We note, however, that the proofs of our results rely primarily on self contained martingale-type arguments, rather than invoking results from dynamical systems based treatment of stochastic approximation literature.} Because of this, despite drawing only one sample per round, the CESFP estimate of $U_i(\alpha_i,q_{-i}(t))$ \emph{effectively} utilizes information from $t$ samples, while the Sampled FP estimate utilizes information from (only) $t^\gamma$, $\gamma>1/2$ samples. In practice, the CESFP and Sampled FP estimation rules tend to reduce estimation  error at comparable rates. See Section \ref{sec_simulations} for more details.

\subsection{Main Result}
The following theorem states that CESFP achieves learning in the same sense as classical FP (and Sampled FP). The result is stated for a slightly broader classes of games than previously discussed, including two-player zero-sum games \cite{fudenberg1998theory}, and generic $2\times m$ games \cite{berger2005fictitious}.
\begin{theorem}
Let $\Gamma$ be a potential game, zero-sum game, or generic $2\times m$ game. Let $\{a(t)\}_{t\geq 1}$ be a Computationally Efficient Sampled FP process, and assume \textbf{A.\ref{a_rho_1}} holds. Then $\lim_{t\rightarrow\infty} d(q(t),NE) =0$ (a.s.).
\end{theorem}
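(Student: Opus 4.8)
\emph{Proof plan.} The plan is to identify the empirical-distribution sequence $\{q(t)\}_{t\geq 1}$ produced by CESFP with a \emph{Generalized Weakened Fictitious Play} (GWFP) process in the sense of Leslie and Collins \cite{leslie2006generalised}, and then invoke their convergence theory. Recall that a GWFP process is any sequence satisfying
\[
q(t+1)-q(t)\in \tfrac{1}{t+1}\Bigl(BR^{\epsilon_t}\bigl(q(t)\bigr)-q(t)+M_{t+1}\Bigr),
\]
where $\epsilon_t\to 0$, $\{M_{t+1}\}$ is a suitably integrable martingale difference sequence, and $BR^{\epsilon}_i(p):=\{\sigma_i\in\Delta_i:U_i(\sigma_i,p_{-i})\geq\max_{\alpha_i\in A_i}U_i(\alpha_i,p_{-i})-\epsilon\}$ is the $\epsilon$-best-response correspondence. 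Comparing with the CESFP update \eqref{qt_CESFP}, it suffices to show that the pure action $a(t+1)$ selected in \eqref{efficient_FP_action_eq} satisfies $a(t+1)\in BR^{\epsilon_t}(q(t))$ for some (random) $\epsilon_t\to 0$ a.s.; the sampling randomness is then entirely absorbed into the set-valued map, so one may take $M_{t+1}\equiv 0$. Since Leslie--Collins show that every GWFP process converges to $NE$ whenever the underlying game has the fictitious-play property---a class containing potential (in particular identical-interest) games, two-player zero-sum games, and generic $2\times m$ games---the theorem follows once this reduction is in place.

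First I would control the utility-estimation error. Fix a player $i$ and an action $\alpha_i\in A_i$ and set $e(t):=\widehat U_i(\alpha_i,t)-U_i(\alpha_i,q_{-i}(t))$. Let $\mathcal F_t$ be the $\sigma$-algebra generated by the play and test actions through the drawing of $a(t)$, so that $q(t)$ and $\widehat U_i(\alpha_i,t-1)$ are $\mathcal F_t$-measurable while the sample $a^*(t)\sim q(t)$ is not; since the coordinates of $a^*(t)$ are drawn independently, $\mathbb E[\,U_i(\alpha_i,a^*_{-i}(t))\mid\mathcal F_t\,]=U_i(\alpha_i,q_{-i}(t))$, so $W(t):=U_i(\alpha_i,a^*_{-i}(t))-U_i(\alpha_i,q_{-i}(t))$ is a martingale difference, bounded uniformly because $u_i$ is bounded on the finite joint action set. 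Subtracting $U_i(\alpha_i,q_{-i}(t))$ from \eqref{CESFP_estimate_rule} and adding and subtracting $U_i(\alpha_i,q_{-i}(t-1))$ gives the scalar recursion
\[
e(t)=(1-\rho(t))\,e(t-1)+\rho(t)\,W(t)+(1-\rho(t))\,b(t),
\]
where $b(t):=U_i(\alpha_i,q_{-i}(t-1))-U_i(\alpha_i,q_{-i}(t))$ obeys $|b(t)|\leq KM/t$ by Lipschitz continuity of $U_i(\alpha_i,\cdot)$ together with \eqref{U_step_size}.

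Next I would solve this recursion. Writing $\beta(t,s):=\prod_{r=s+1}^{t}(1-\rho(r))$ (with $\beta(t,t):=1$), unrolling from $t=1$ yields $e(t)=\beta(t,0)e(0)+\sum_{s=1}^{t}\beta(t,s)\bigl(\rho(s)W(s)+(1-\rho(s))b(s)\bigr)$, which I would bound termwise. The initial term $\beta(t,0)e(0)$ vanishes since $\textbf{A.\ref{a_rho_1}}$ forces $\sum_t\rho(t)=\infty$, hence $\beta(t,0)\to 0$. For the stochastic term, $\sum_t\rho(t)^2<\infty$ makes the martingale $S_t:=\sum_{s\leq t}\rho(s)W(s)$ an $L^2$-bounded martingale, hence convergent a.s.; an Abel-summation identity together with $\beta(t,s)-\beta(t,s-1)=\rho(s)\beta(t,s)$ and $\sum_{s\leq t}\rho(s)\beta(t,s)=1-\beta(t,0)\to 1$ expresses $\sum_{s\leq t}\beta(t,s)\rho(s)W(s)$ as $S_t$ minus a Toeplitz-type average of the $S_s$, which converges to the same limit, so the term tends to $0$ a.s. For the bias term write $(1-\rho(s))b(s)=\rho(s)c(s)$ with $|c(s)|\leq\tfrac{KM}{s\rho(s)}\to 0$ by the hypothesis $\tfrac{1}{t\rho(t)}\to 0$ of $\textbf{A.\ref{a_rho_1}}$, and the same Toeplitz-averaging argument gives $\sum_{s\leq t}\beta(t,s)\rho(s)c(s)\to 0$. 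Hence $e(t)\to 0$ a.s., and maximizing over the finitely many $(i,\alpha_i)$ gives $\delta(t):=\max_{i,\alpha_i}|\widehat U_i(\alpha_i,t)-U_i(\alpha_i,q_{-i}(t))|\to 0$ a.s. Because $a_i(t+1)$ maximizes $\widehat U_i(\cdot,t)$, for every $\alpha_i'\in A_i$ we then get $U_i(a_i(t+1),q_{-i}(t))\geq\widehat U_i(a_i(t+1),t)-\delta(t)\geq\widehat U_i(\alpha_i',t)-\delta(t)\geq U_i(\alpha_i',q_{-i}(t))-2\delta(t)$, i.e.\ $a(t+1)\in BR^{\epsilon_t}(q(t))$ with $\epsilon_t:=2\delta(t)\to 0$ a.s.; with \eqref{qt_CESFP} this realizes $\{q(t)\}$ as a zero-noise GWFP process on a set of full measure, on which the Leslie--Collins convergence result applies.

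I expect the main obstacle to be the almost-sure convergence $e(t)\to 0$ for this \emph{coupled} recursion, in which the tracked quantity $U_i(\alpha_i,q_{-i}(t))$ is itself drifting. The delicate point is to deploy the three parts of $\textbf{A.\ref{a_rho_1}}$ in exactly the right roles---$\sum_t\rho(t)^2<\infty$ to tame the martingale sampling noise, $\tfrac{1}{t\rho(t)}\to 0$ to make the $O(1/t)$ drift of the target negligible against the $\rho(t)$-weighted tracking, and $\sum_t\rho(t)=\infty$ to force the forgetting factors $\beta(t,s)$ to decay---and to make the Abel-summation and Toeplitz-averaging steps rigorous, uniformly over the finitely many player--action pairs. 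Everything downstream (the $\epsilon$-best-response reduction and the appeal to GWFP theory) is then routine.
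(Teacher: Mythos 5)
Your proposal is correct and follows essentially the same route as the paper: reduce the theorem to showing $|\widehat U_i(\alpha_i,t)-U_i(\alpha_i,q_{-i}(t))|\to 0$ a.s. (so that actions are $\epsilon_t$-best responses and the Leslie--Collins GWFP result applies), derive the same tracking recursion for the error, and split it into a martingale-noise part (controlled by $\sum_t\rho(t)^2<\infty$) and a drift part of size $O\bigl(\tfrac{1}{t\rho(t)}\bigr)$ handled by Toeplitz averaging. The only (immaterial) deviation is that you dispatch the noise term via a.s. convergence of the $L^2$-bounded martingale $S_t=\sum_{s\le t}\rho(s)W(s)$ plus Abel summation, whereas the paper applies the Robbins--Siegmund almost-supermartingale lemma to $F_t^2$; both are standard and valid.
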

\begin{proof}
We will prove the result by showing that there exists a sequence $\{\epsilon_t\}_{t\geq 1}$ with $\lim_{t\rightarrow\infty} \epsilon_t = 0$ such that $U_i(a_i(t+1),q_{-i}(t)) \geq \max_{\alpha_i \in A_i} U_i(\alpha_i,q_{-i}(t)) - \epsilon_t.$
By \cite{leslie2006generalised}, Corollary 5, this is sufficient to guarantee $\lim_{t\rightarrow\infty} d(q(t),NE) = 0$.

Since, by \eqref{efficient_FP_action_eq}, $a_i(t+1) \in \arg\max_{\alpha_i \in A_i} \widehat U_i(\alpha_i,t)$, it is sufficient to show that, for every $i\in N$ and every $\alpha_i \in A_i$,
\begin{equation}
\label{suff_cond}
|\widehat U_i(\alpha_i,t) - U_i(\alpha_i,q_{-i}(t))| \rightarrow 0 \mbox{ as } t\rightarrow \infty ~\mbox{ a.s.}
\end{equation}
\noindent (Note that the individual action spaces $A_{i}$ are finite.) We will show this by invoking the result of Lemma \ref{lemma1} (see appendix). To that end fix $i\in N$ and $\alpha_i\in A_i$, and let $X(t) := U_i(\alpha_i,a^*_{-i}(t)),~ t\geq 1,$ $\mu(t) := U_i(\alpha_i,q_{-i}(t)),~ t\geq 1,$ and $\hat \mu(t) := \widehat U_i(\alpha_i,t),~t\geq 0$.
For $t\geq 0$, let $\mathcal{F}_t := \sigma(\{q_{-i}(s)\}_{s=1}^{t+1})$.
Note that $\mu(t)$ is $\mathcal{F}_{t-1}$-measurable and that $E(X(t)\vert\mathcal{F}_{t-1}) = \mu(t)$.

In order to invoke Lemma \ref{lemma1} it is sufficient to show that
\begin{equation}
\left(\frac{1}{\rho(t)}-1\right)\left(\mu(t) - \mu(t-1)\right) \rightarrow 0.
\label{thrm1_eq1}
\end{equation}
Let $M := \max_{q_{-i}',q_{-i}'' \in \Delta_{-i}} \|q_{-i}' - q_{-i}''\|$, and note that by \eqref{qt_CESFP},
$\|q_{-i}(t) - q_{-i}(t-1)\| \leq \frac{M}{t}.$
The utility function $U_i$ is multilinear, and hence Lipschitz continuous, so there exists a constant $K$ such that
\begin{align}
& |\mu(t) - \mu(t-1)| = |U_i(\alpha_i,q_{-i}(t)) - U_i(\alpha_i,q_{-i}(t-1))|\\
& \leq K\|q_{-i}(t) - q_{-i}(t-1)\| \leq KM/t.
\end{align}
\noindent This, together with \textbf{A.\ref{a_rho_1}}, implies that \eqref{thrm1_eq1} holds.

Thus, $X(t),~\mu(t),~\hat \mu(t),$ and $\mathcal{F}_t$ as defined above fit the template of Lemma \ref{lemma1}. By Lemma \ref{lemma1}, $
|\widehat U_i(\alpha_i,t) - U_i(\alpha_i,q_{-i}(t))| =  |\hat \mu(t) - \mu(t)| \rightarrow 0 \mbox{ as } t\rightarrow \infty,$
verifying that \eqref{suff_cond} holds.
\end{proof}

\section{Simulation Results}
\label{sec_simulations}
In order to demonstrate the computational properties of CESFP and Sampled FP in large games, we simulated both algorithms in a simple traffic routing scenario.
Let $N = \{1,\ldots n\}$ denote a finite set of drivers (or players). Drivers share a common starting point and a common destination and may travel on one of $50$ parallel routes. Let the set of routes be denoted by $R$, and let the action space of player $i$ be given by $Y_i = R, ~\forall i$. Let $\sigma_r(y)$ denote the number of drivers on route $r$ given the joint strategy $y$. Each route $r\in R$ has an associated cost function $c_r:\mathbb{N} \rightarrow \mathbb{R}$ signifying the delay experienced on route $r$ given the number of drivers using the route. Let the utility function of player $i$ be given by $u_i(y) := -c_{y_i}(\sigma_{y_i}(y))$. We note that this game is an instance of a congestion game---a known subset of potential games.

We simulated Sampled FP and CESFP in this routing scenario with 1000 drivers. In the simulation, Sampled FP used a sample rate of $k_t = \lfloor t^{.6} \rfloor$ samples per round and CESFP used the parameter $\rho(t) = t^{-.6},~\forall t$.
Figure \ref{fig_eval_time} shows the wall clock running time through iteration $t$ for each algorithm.
\begin{figure}[t]
\centering
 \vspace{-0.2cm}
    \subfigure[]{\includegraphics[width=4.35cm]{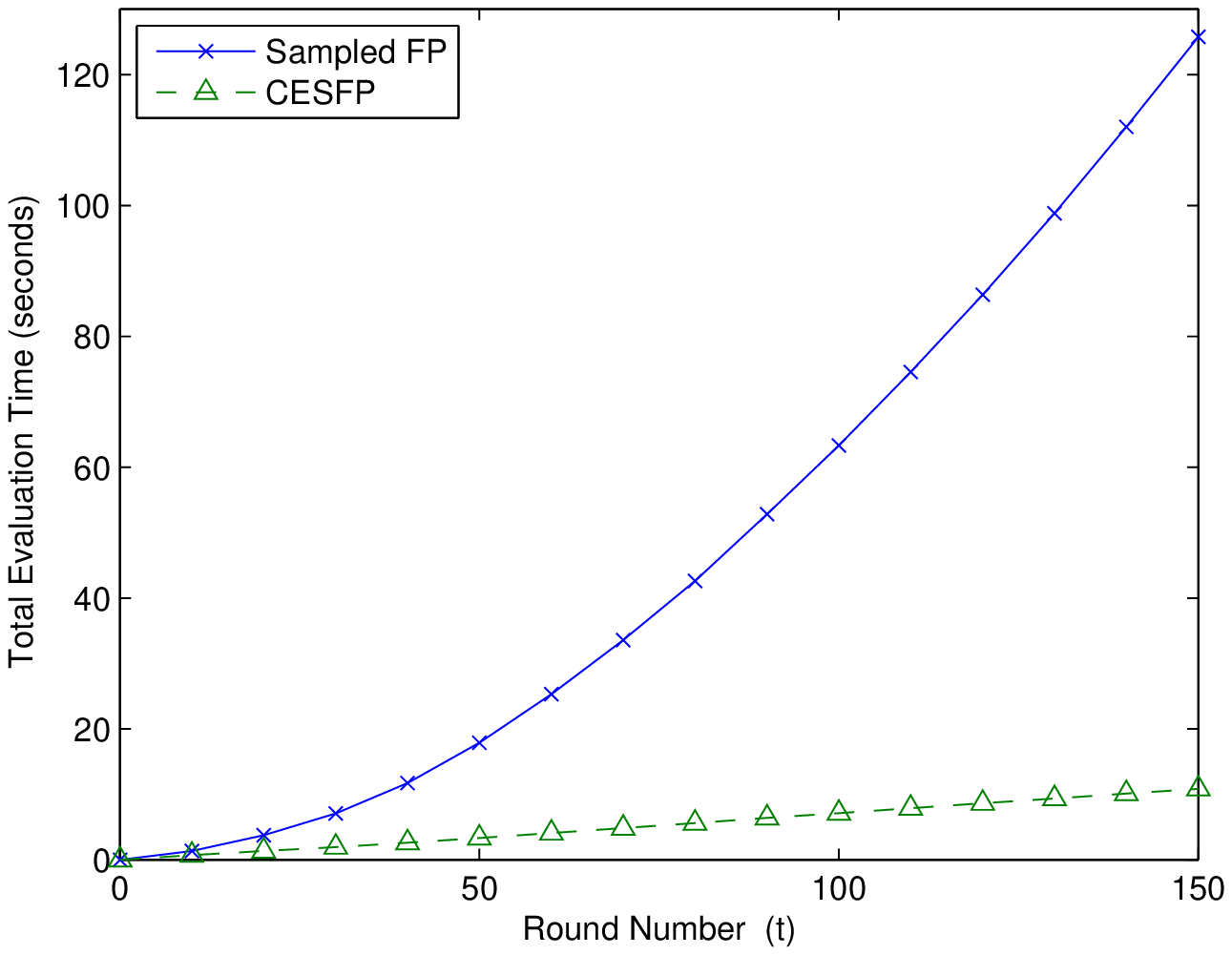}\label{fig_eval_time}}\hspace{0.0cm}
    \subfigure[]{\includegraphics[width=4.35cm]{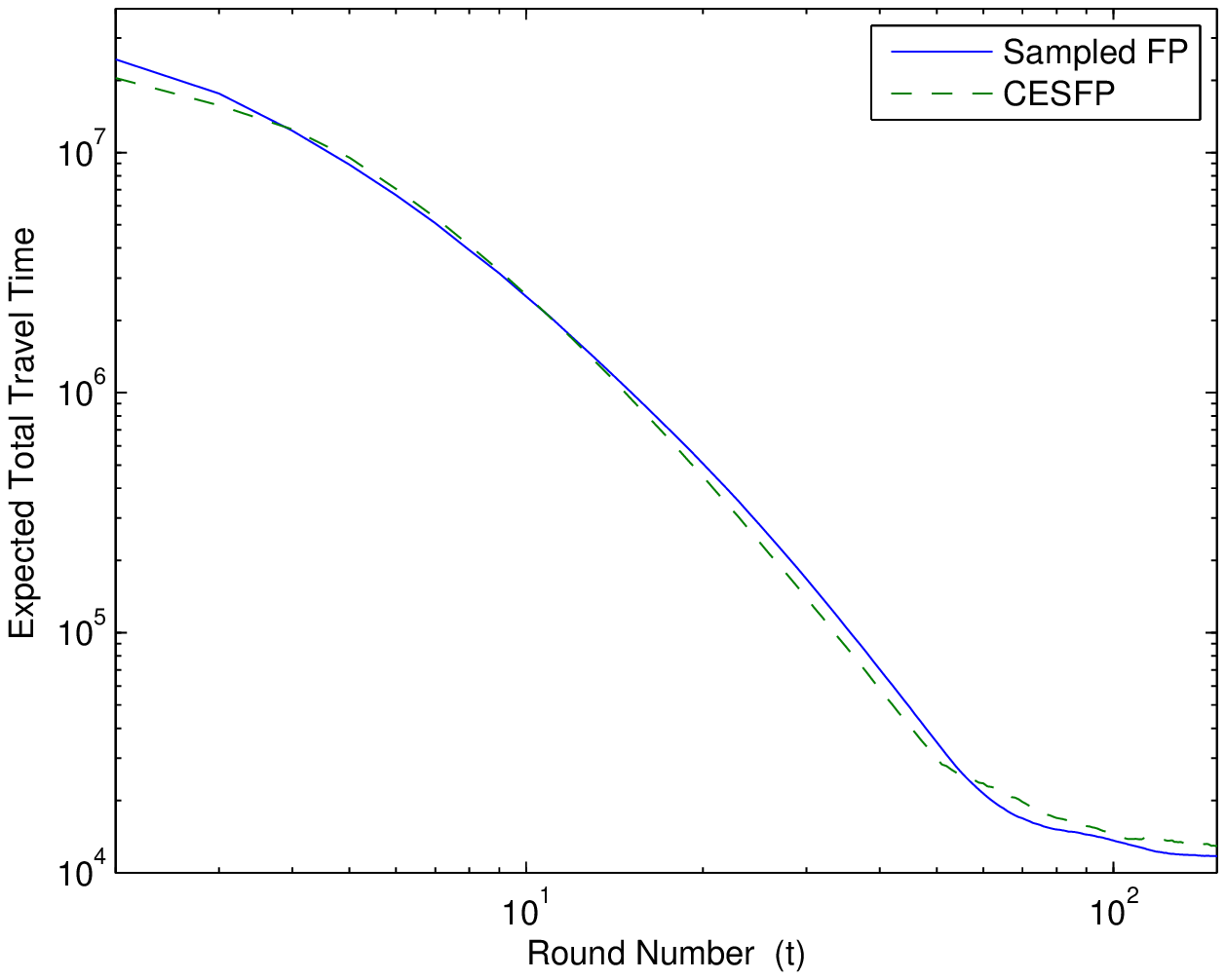}\label{fig_util}}
\caption{(a) Wall clock evaluation time for each algorithm; and (b) Expected total travel time for the mixed strategy $q(t)$.}\label{Figures2}
\end{figure}

Figure \ref{fig_util} shows a logarithmic plot of the expected total travel time if the current-iteration empirical distribution $q(t)$ were to be used as the joint mixed strategy. While a Nash equilibrium of the traffic routing game does not necessarily minimize total travel time, the trend shown in Figure \ref{fig_util} is consistent with convergence of $q(t)$ to NE, and suggests a comparable convergence rate (per repeated-play iteration) for both algorithms.


\section{Conclusions}
\label{sec_conclusions}
The classical Fictitious Play (FP) algorithm can be prohibitively difficult to implement in games with many players. Sampled FP \cite{Lambert01}---a Monte-Carlo based variant of FP---has previously been proposed as a method of mitigating computational complexity in large-scale implementations of FP.  Though Sampled FP does achieve mitigations in  complexity, it suffers from the drawback that the number of samples that must be gathered in each stage of the algorithm grows without bound.

The paper proposed Computationally Efficient Sampled FP (CESFP)---a variant of Sampled FP that requires only one sample to be drawn per stage of the algorithm. CESFP is shown to achieve Nash equilibrium learning in the same sense as FP.
A simulation example was used to demonstrate the computational properties of CESFP compared to Sampled FP.
The simulation example used a game with fairly simple structural properties. An interesting future research direction may be to study the relative empirical performance of Sampled FP and CESFP in games with more complex structure (e.g. \cite{garcia2000fictitious}).

\section*{Appendix}
{\small
\begin{lemma}
\label{lemma0}
Let $\{ x_t \}_{t \geq 0}$ satisfy $x_t \rightarrow x$ as $t\rightarrow\infty$.
Let $\{\rho_t \}_{t \geq 0}$ satisfy $0 < \rho_t \leq 1$  and $\sum_{t \geq 0} \rho_t = \infty$.
Then, the sequence $\{ y_t \}_{t \geq 0}$ given by $y_{t} = ( 1 - \rho_t ) y_{t-1} + \rho_t x_t, \quad t\geq 1,$
satisfies $y_t \rightarrow x$ as $t\rightarrow\infty$.
\end{lemma}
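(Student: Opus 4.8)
The plan is to reduce to the case $x=0$ and then work directly with the explicit solution of the linear recursion. First I would observe that replacing $x_t$ by $\tilde x_t := x_t - x$ and $y_t$ by $\tilde y_t := y_t - x$ preserves the recursion, since $(1-\rho_t)x + \rho_t x = x$; moreover $\tilde x_t \to 0$. So it suffices to prove that $x_t \to 0$ implies $y_t \to 0$.

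Next I would unroll the recursion into the closed form
\begin{equation}
y_t = \Big(\prod_{s=1}^t (1-\rho_s)\Big) y_0 + \sum_{k=1}^t \rho_k \Big(\prod_{s=k+1}^t (1-\rho_s)\Big) x_k,
\end{equation}
with the convention that an empty product equals $1$. The first term vanishes as $t\to\infty$: since $0<\rho_s\le 1$ we have $\prod_{s=1}^t(1-\rho_s)\le \exp\big(-\sum_{s=1}^t\rho_s\big)\to 0$, using the hypothesis $\sum_s\rho_s=\infty$ (and if some $\rho_s=1$ the product is identically zero from that index onward). The heart of the argument is controlling the convolution sum $S_t := \sum_{k=1}^t \rho_k \prod_{s=k+1}^t(1-\rho_s)\,x_k$.

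The key tool is the telescoping identity $\rho_k\prod_{s=k+1}^t(1-\rho_s) = \prod_{s=k+1}^t(1-\rho_s) - \prod_{s=k}^t(1-\rho_s)$, which yields, for any $N<t$, the bound $\sum_{k=N+1}^t \rho_k\prod_{s=k+1}^t(1-\rho_s) = 1 - \prod_{s=N+1}^t(1-\rho_s) \le 1$. Given $\epsilon>0$, I would fix $N$ large enough that $|x_k|<\epsilon$ for all $k>N$, and split $S_t$ at $N$. The tail is bounded in absolute value by $\epsilon\sum_{k=N+1}^t\rho_k\prod_{s=k+1}^t(1-\rho_s)\le\epsilon$. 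The head is bounded by $\big(\max_{k\le N}|x_k|\big)\,\big(\sum_{k=1}^N\rho_k\big)\,\prod_{s=N+1}^t(1-\rho_s)$, which --- with $N$ now fixed --- tends to $0$ as $t\to\infty$ by the same product-to-zero argument. Hence $\limsup_{t\to\infty}|y_t|\le\epsilon$, and letting $\epsilon\downarrow 0$ completes the proof.

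I do not expect a genuine obstacle; the only points needing care are (i) justifying $\prod_s(1-\rho_s)\to 0$ from $\sum_s\rho_s=\infty$ even when some factors may equal zero, and (ii) the index bookkeeping in the telescoping identity and, crucially, the fact that $N$ must be chosen \emph{before} sending $t\to\infty$ so that the head term genuinely vanishes in the limit.
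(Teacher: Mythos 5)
Your proof is correct, and all the delicate points you flag (the empty-product convention, $\prod(1-\rho_s)\to 0$ via $1-\rho\le e^{-\rho}$ together with $\sum\rho_s=\infty$, the telescoping identity $\rho_k\prod_{s=k+1}^t(1-\rho_s)=\prod_{s=k+1}^t(1-\rho_s)-\prod_{s=k}^t(1-\rho_s)$, and fixing $N$ before letting $t\to\infty$) are handled properly. The paper, by contrast, disposes of this lemma in one line by citing Toeplitz's lemma. The two are closely related: unrolling the recursion exhibits $y_t$ (up to the vanishing $y_0$ term) as $\sum_{k\le t} a_{t,k}x_k$ with weights $a_{t,k}=\rho_k\prod_{s=k+1}^t(1-\rho_s)$, and your telescoping computation shows precisely that these weights satisfy the Toeplitz conditions ($a_{t,k}\to 0$ for fixed $k$, $\sum_k a_{t,k}\le 1$, and $\sum_k a_{t,k}=1-\prod_{s=1}^t(1-\rho_s)\to 1$); your head/tail split at $N$ is the standard proof of Toeplitz's lemma specialized to this array. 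So your argument buys self-containedness and makes the mechanism explicit, at the cost of a page of bookkeeping that the citation avoids; nothing is gained or lost in generality for the purposes of the paper.
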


\begin{proof}
The result follows from Toeplitz's lemma \cite{loeve1977}.
\end{proof}

\begin{lemma}
\label{lemma1}
Let $\{\rho_t \}_{t \geq 1}$ satisfy $0 < \rho_t \leq 1$,
$\sum_{t \geq 1} \rho_t = \infty$ and $\sum_{t \geq 1} \rho_t^2 < \infty$.
Let $\{ {\mathcal F}_t \}_{t \geq 1}$ be a filtration and let $\{ X_t
\}_{t \geq 1}$ be a sequence of bounded random variables, adapted to the
filtration, say, $| X_t | \leq B$. Let $\mu_t = {\mathbb E}( X_t \,|\, {\mathcal
F}_{t-1} )$ and assume that $\left( \frac{1}{\rho_t} -1 \right) (\mu_t - \mu_{t-1})
\rightarrow 0$ almost surely. Then, the sequence of random variables $\{ \widehat
\mu_t \}_{t \geq 0}$ given by $
\widehat \mu_t = ( 1 - \rho_t ) \widehat
\mu_{t-1} + \rho_t X_t, \quad t \geq 1,$
satisfies $|\widehat
\mu_t - \mu_t| \rightarrow 0$ almost surely.
\end{lemma}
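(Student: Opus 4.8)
The plan is to analyze the estimation error $e_t := \widehat\mu_t - \mu_t$ directly: I will show it obeys a stochastic-approximation-type recursion driven by a bounded martingale-difference term plus a vanishing bias, and then decompose $e_t$ into three pieces, each tending to $0$ almost surely. First I would subtract $\mu_t$ from the defining recursion for $\widehat\mu_t$ and regroup. With $M_t := X_t - \mu_t$ and $\delta_t := \left(\tfrac{1}{\rho_t}-1\right)(\mu_t-\mu_{t-1})$, a short algebraic step gives
$$ e_t = (1-\rho_t)\,e_{t-1} + \rho_t\,(M_t - \delta_t), \qquad t\ge 1. $$
Since $X_t$ is adapted to $\{\mathcal{F}_t\}$ and $\mu_t = \mathbb{E}(X_t\mid\mathcal{F}_{t-1})$, the sequence $\{M_t\}$ is a martingale difference sequence, and it is bounded by $|M_t|\le 2B$. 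The hypothesis of the lemma is precisely that $\delta_t \to 0$ a.s.

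Next, exploiting linearity of $a\mapsto(1-\rho_t)a$, I would split the recursion into three separately driven components. Let $P_t := \prod_{s=1}^t(1-\rho_s)$, and define $N_t := (1-\rho_t)N_{t-1}+\rho_t M_t$ and $Z_t := (1-\rho_t)Z_{t-1}+\rho_t\delta_t$ with $N_0 = Z_0 = 0$. A one-step induction shows $e_t = P_t e_0 + N_t - Z_t$, so it suffices to prove each term vanishes a.s. The transient term is immediate: $0\le P_t \le \exp\!\left(-\sum_{s=1}^t\rho_s\right)\to 0$ because $\sum_t\rho_t = \infty$, and $e_0$ is bounded. The bias term is covered pathwise by Lemma \ref{lemma0}: on almost every sample path $\delta_t\to 0$, while $0<\rho_t\le 1$ and $\sum_t\rho_t=\infty$, hence $Z_t\to 0$.

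The martingale-noise term $N_t$ is the heart of the argument, and it is \emph{not} itself a martingale, so I would instead control $V_t := N_t^2$. One checks first that $|N_t|\le 2B$ for every $t$ (immediate from $N_0=0$, $0<\rho_t\le 1$, and $|M_t|\le 2B$), so $V_t$ is bounded and integrable. Using $\mathbb{E}(M_t\mid\mathcal{F}_{t-1})=0$, $\mathbb{E}(M_t^2\mid\mathcal{F}_{t-1})\le 4B^2$, and $(1-\rho_t)^2\le 1-\rho_t$, one obtains
$$ \mathbb{E}(V_t\mid\mathcal{F}_{t-1}) \le V_{t-1} - \rho_t V_{t-1} + 4B^2\rho_t^2. $$
Because $\sum_t\rho_t^2<\infty$, the Robbins--Siegmund almost-supermartingale convergence theorem yields that $V_t$ converges a.s.\ to a finite limit and that $\sum_t\rho_t V_{t-1}<\infty$ a.s.; since $\sum_t\rho_t=\infty$, this forces $\liminf_t V_t = 0$, so the limit is $0$, i.e.\ $N_t\to 0$ a.s. Combining the three limits gives $e_t = \widehat\mu_t - \mu_t \to 0$ a.s., as claimed.

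The main obstacle is the noise term $N_t$; the other two pieces are bookkeeping and a direct appeal to Lemma \ref{lemma0}. The delicate points are recognizing that $N_t$ stays bounded so that passing to $V_t=N_t^2$ loses nothing, that $\sum_t\rho_t^2<\infty$ is exactly what makes $V_t$ an almost supermartingale with summable compensator $\sum_t\rho_t V_{t-1}$, and that $\sum_t\rho_t=\infty$ then pins the limit at $0$. If one prefers to avoid quoting Robbins--Siegmund, the same conclusion for $N_t$ follows from the standard route: write $N_t=\sum_{k=1}^t\beta_k^t M_k$ with weights $\beta_k^t:=\rho_k\prod_{s=k+1}^t(1-\rho_s)\ge 0$ summing to $1-P_t\le 1$, observe that $\sum_{k=1}^t(\beta_k^t)^2\to 0$ (hence $N_t\to 0$ in $L^2$), and upgrade to almost-sure convergence via a maximal-inequality/interpolation estimate that uses the boundedness and slow variation of the weights. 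The supermartingale argument is the cleanest.
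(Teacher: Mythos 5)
Your proposal is correct and follows essentially the same route as the paper: subtract $\mu_t$ to get an error recursion, split off the bias term and handle it with Lemma \ref{lemma0} (Toeplitz), and control the martingale-noise term by showing its square is an almost supermartingale via Robbins--Siegmund, with $\sum_t \rho_t = \infty$ pinning the limit at zero. The only cosmetic differences are that you peel off the initial condition as a separate transient term (the paper absorbs it into the noise component's initialization) and you use $(1-\rho_t)^2 \le 1-\rho_t$ to state the supermartingale inequality slightly more cleanly.
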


\begin{proof}
Subtracting $\mu_t$ from both sides of $\widehat \mu_t = ( 1 - \rho_t ) \widehat
\mu_{t-1} + \rho_t X_t$ gives $E_t = ( 1 - \rho_t ) E_{t-1}
+ \rho_t \left( X_t - \mu_t + \left( \frac{1}{\rho_t} - 1 \right) \delta_t \right),$ where
$E_t := \widehat \mu_t - \mu_t$ and $\delta_t := \mu_{t-1} - \mu_{t}$, $\delta_0 := 0$.

Introduce the $\mathcal{F}_{t}$-adapted sequences, for $t\geq 1$
\vskip-10pt
\begin{eqnarray*}
F_t & = & ( 1 - \rho_t ) F_{t-1} + \rho_t ( X_t - \mu_t ), \quad F_1 = E_1 \\
G_t & = & ( 1- \rho_t ) G_{t-1} + \rho_t \left( \frac{1}{\rho_t} - 1 \right) \delta_t, \quad G_1 = 0,
\end{eqnarray*}
\vskip-3pt
and note that $E_t = F_t + G_t$. We will now show that $F_t \rightarrow 0$ and $G_t \rightarrow 0$ almost surely.

By assumption, $\left( \frac{1}{\rho_t} - 1 \right) \delta_t \rightarrow 0$ almost surely. Lemma \ref{lemma0} applied to $\{ G_t \}_{t \geq 1}$ gives $G_t \rightarrow 0$ almost surely.

On the other hand,
\vskip-10pt
\begin{align*}
{\mathbb E}( F_t^2 \,|\,{\mathcal F}_{t-1}) & = ( 1 - \rho_t )^2 F_{t-1}^2 + \rho_t^2
{\mathbb E}\left( ( X_t - \mu_t)^2 \, | \, {\mathcal F}_{t-1} \right)\\
& \leq
( 1 - \rho_t )^2 F_{t-1}^2 + \rho_t^2 4 B^2  \\ & = \left( 1 + \rho_t^2 \right) F_{t-1}^2 -
2 \rho_t F_{t-1}^2 + \rho_t^2 4 B^2.
\end{align*}
\vskip-3pt
\noindent Since $\sum_{t \geq 0} \rho_t^2 < \infty$, from the Robbins-Monro Lemma \cite{robbins1985convergence} we conclude that, almost surely, $\{ F_t^2 \}_{t \geq 1}$ converges and $\sum_{t \geq 0} \rho_t F_{t-1}^2 <
\infty$: these two properties imply $F_t \rightarrow 0$ almost surely.
\end{proof}

\begin{lemma}
\label{lemma_a_rho_implication}
Let $\{\rho_t\}_{t\geq 1}$ be such that $0 < \rho_t \leq 1$ and $\lim_{t\rightarrow\infty} \frac{1}{t\rho_t} = 0$. Then $\sum_{t\geq 1} \rho_t = \infty$.
\end{lemma}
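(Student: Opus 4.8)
The plan is to compare $\{\rho_t\}$ termwise with the harmonic sequence and invoke divergence of the harmonic series. The hypothesis $\frac{1}{t\rho_t}\to 0$ is far stronger than what is needed; all we require is that $\frac{1}{t\rho_t}$ is eventually bounded, which convergence to $0$ certainly gives.

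Concretely, first I would use the definition of the limit: since $\lim_{t\to\infty}\frac{1}{t\rho_t}=0$, there exists $T\geq 1$ such that $\frac{1}{t\rho_t}\leq 1$ for all $t\geq T$ (take $\epsilon=1$ in the definition; note $\rho_t>0$ guarantees $\frac{1}{t\rho_t}$ is well defined and positive). Rearranging this inequality — which is legitimate because $\rho_t>0$ and $t\geq 1>0$ — yields $\rho_t\geq \frac{1}{t}$ for every $t\geq T$.

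Next I would bound the tail of the series below by the tail of the harmonic series:
\[
\sum_{t\geq 1}\rho_t \;\geq\; \sum_{t\geq T}\rho_t \;\geq\; \sum_{t\geq T}\frac{1}{t} \;=\;\infty,
\]
where the last equality is the standard divergence of the harmonic series (the finitely many omitted terms $\rho_1,\dots,\rho_{T-1}$ are nonnegative, so dropping them only decreases the sum). This gives $\sum_{t\geq 1}\rho_t=\infty$, as claimed.

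There is essentially no obstacle here; the only point requiring a modicum of care is making sure the index $T$ from the limit definition is handled cleanly (the bound $\rho_t\geq 1/t$ need only hold for large $t$, and the harmonic tail still diverges), and noting that $0<\rho_t$ is what makes the manipulation $\frac{1}{t\rho_t}\leq 1 \Leftrightarrow \rho_t\geq \frac{1}{t}$ valid. The constraint $\rho_t\leq 1$ plays no role in this particular lemma.
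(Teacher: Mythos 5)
Your proof is correct and takes essentially the same approach as the paper: both establish an eventual lower bound $\rho_t \geq c/t$ and invoke divergence of the harmonic series (the paper gets the bound via a short contradiction argument, while you extract it directly from the limit definition with $\epsilon = 1$, which is if anything slightly cleaner).
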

\begin{proof}
We claim there exists $c,T > 0$ such that $\rho_t \geq c\frac{1}{t}$ for all $t\geq T$. If this were not so, then for every $c>0$ there would hold $\rho_t < c\frac{1}{t}$ infinitely often, which would imply $\frac{1}{t\rho_t} > \frac{1}{c}$ infinitely often---contradicting the hypothesis that $\lim_{t\rightarrow\infty} \frac{1}{t\rho_t} = 0.$

Thus, there exists $c,T > 0$ such that $\rho_t \geq c\frac{1}{t} ~\forall t\geq T$, and hence $\sum_{t\geq 1} \rho_t \geq \sum_{t\geq T} \rho_t \geq c\sum_{t\geq T} \frac{1}{t} = \infty.$
\end{proof}
}

\bibliographystyle{IEEEtran}
\bibliography{myRefs}

\end{document}